\newcommand{\Hmm}[1]{\leavevmode{\marginpar{\tiny%
$\hbox to 0mm{\hspace*{-0.5mm}$\leftarrow$\hss}%
\vcenter{\vrule depth 0.1mm height 0.1mm width \the\marginparwidth}%
\hbox to 0mm{\hss$\rightarrow$\hspace*{-0.5mm}}$\\\relax\raggedright #1}}}
\newtheorem{thm}{Theorem}[section]
\newtheorem{lemma}[thm]{Lemma}
\theoremstyle{definition}
\newtheorem{eg}[thm]{Example}
\newtheorem{rem}[thm]{Remark}
\newcommand{\R}{{\mathbb R}}
\newcommand{\N}{{\mathbb N}}
\newcommand{\al}{{\alpha}}
\newcommand{\eps}{{\varepsilon}}
\newcommand{\gm}{{\gamma}}
\newcommand{\si}{{\sigma}}
\newcommand{\lm}{{\lambda}}
\newcommand{\ph}{{\varphi}}
\begin{document}
\title[Rapidly branching graphs]{Geometry and spectrum of rapidly branching graphs}

\author[M. Keller]{Matthias Keller}
\author[F. M\"unch]{Florentin  M\"unch}
\address{M. Keller, F. M\"unch,
Mathematisches Institut \\Friedrich Schiller Universit{\"a}t Jena \\07743 Jena, Germany }
\email{florentin.muench@uni-jena.de}
\email{m.keller@uni-jena.de}
\author[F.~Pogorzelski]{Felix Pogorzelski}
\address{F.~Pogorzelski,  Department of Mathematics, Technion - Israel Institute of Technology, 32000 Haifa, Israel}
\email{felixp@technion.ac.il}


\date{\today}
\maketitle

\begin{abstract}
We study graphs whose vertex degree tends and which are, therefore, called rapidly branching. We prove spectral estimates, discreteness of spectrum, first order eigenvalue and Weyl asymptotics solely in  terms of the vertex degree growth. The underlying techniques are estimates on the isoperimetric constant. Furthermore, we give lower volume growth bounds and we provide a new criterion for stochastic incompleteness.
\end{abstract}

\section{Introduction}
The  spectral theory of Laplacians on graph is a vibrant topic of study. A specific focus lies on geometric
criteria for spectral bounds and  discreteness of spectrum, see e.g.
\cite{Do,DK,BGK1,Fu2,Gol2,K1,KL2,KPP,Mo91,Woj2,Zuk}. A necessary condition for
discreteness of spectrum is that the vertex degree tends to infinity, a condition which  is called rapidly branching, \cite{Fu2}. This
condition is by no means sufficient. Throughout the years various
geometric criteria were given to ensure that rapid branching
implies purely discrete spectrum, see e.g. \cite{BGK1,Fu2,K1,KPP,Woj2}. \\
The novelty of this work is to provide one single and concise criterion
on the growth of the vertex degree which by itself implies discreteness of spectrum.
Specifically, we consider graphs
whose vertex degrees grow proportionally to the numbers of vertices
with smaller degree. In this sense we control the acceleration of the vertex degree
growth.
%
We discover that for these graphs,
there is no need to impose a priori conditions on the underlying
geometry such as planarity or curvature assumptions.
Furthermore, we derive valuable geometric implications
such as bounds for the isoperimetric
constant at infinity and volume growth bounds.

Moreover, we study stochastic completeness which is a topic that
has also been investigated intensively for graphs with unbounded degree
throughout the recent years, cf.\@ e.g.\@
\cite{Fol2,GHM,Hu11,Hu2,KL1,KL2,KLW,Woj1,Woj2,Woj3}. We show that, for a certain
acceleration of the vertex degree growth, the graphs are stochastically
incomplete.

\subsection{Set-up and definitions}
Let $G=(V,E)$ be an infinite, simple, locally finite,
connected graph. The {vertex degree} $\deg:V\to\N$ assigns
to each vertex $v$ the number of edges emanating from $v$.
Furthermore, denote $\eta:\R\to\N\cup\{\infty\}$
\begin{align*}
    \eta(k)=\#\{v\in V\mid \deg(v)\leq k\}.
\end{align*}
We are interested in graphs for which $\eta$ grows very slowly. That is
for each bound $k$ there are only few vertices with degree smaller than $k$.  This is measured by the following constants
\begin{align*}
    r&=\sup_{k\in\N}\frac{\eta(k)}{k}\quad\mbox{and}\quad
    r_{\infty}=\limsup_{k\to\infty}\frac{\eta(k)}{k}.
\end{align*}
Clearly, $0\leq r_{\infty}\leq r\leq \infty$. We can think of $1/r$ or $1/r_{\infty}$ as a type of acceleration rate.
One can also see $\eta$ as the spectral counting function of the multiplication operator with the vertex degree function.

Let
\begin{align*}
    d=\min_{v\in V}\deg(v)\quad\mbox{and}\quad
    d_{\infty}=\sup_{K\subseteq V\mbox{\scriptsize{ finite}}}\min_{v\in V\setminus K}\deg(v).
\end{align*}
In \cite{Fu2,K1} graphs with $d_{\infty}=\infty$ are referred to as
\emph{rapidly branching}. In this case we enumerate the vertices
$V=\{v_{k}\}_{k\ge0}$ such that
$$\deg(v_{k})\leq\deg(v_{k+1})$$
and define
\begin{align*}
    d_{k}=\deg(v_{k})
\end{align*}
for  $k\ge0$. Obviously, $d\leq d_{k}\nearrow d_{\infty}$ as  $k\to\infty$.

The main focus of this paper is the spectral theory of the
Laplacian
\begin{align*}
    \Delta\ph(v)=\sum_{w\sim v}(\ph(v)-\ph(w))
\end{align*}
which is a positive selfadjoint operator on the Hilbert space
$\ell^{2}(V)$ of real valued square summable functions with domain
\begin{align*}
    D(\Delta)=\{\ph\in \ell^{2}(V)\mid \Delta\ph\in\ell^{2}(V)\},
\end{align*}
see \cite[Section~1.3]{Woj1}. We denote the bottom of  the spectrum of $\Delta$ by $\lm_{0}=\lm_{0}(\Delta)$.
We say that the spectrum  of $\Delta$
is \emph{purely discrete} if it consists only of isolated
eigenvalues of finite multiplicity. It is easy to see that a
necessary condition for purely discrete spectrum is
$d_{\infty}=\infty$, see e.g. \cite[Proposition~5]{K1}. An example to see that this condition is not sufficient can be found in \cite[Theorem~6.1]{K1}.

In the case of purely discrete eigenvalues we enumerate the
eigenvalues of $\Delta $ in increasing order and counted with
multiplicity by $\lm_{k}$, $k\ge0$. Moreover, we denote the
eigenvalue counting function of $\Delta$ by
\begin{align*}
    N(\lm)=\sup\{k\ge0\mid \lm_{k}\le
    \lm\},
\end{align*}
where $\si(\Delta)$ is the spectrum of $\Delta$.

\subsection{Results}
First we  present the results on  the spectral
theory of $\Delta$. We denote
\begin{align*}
    \gm(s)=\sqrt{1-\frac{(1-s)^{4}}{(1+s^{2})^{2}}}
\end{align*}
for $s\in[0,1]$ and notice that $\gm(s)\in[0,1]$ while $\gm(0)=0$ and $\gm(1)=1$.

The first result deals with the spectral gap.

\begin{thm}\label{t:bottom} If $d_{\infty}=\infty$ and $r_{\infty}<1$, then
$\lm_{0}>0$. If even $r<1$,  then
\begin{align*}
d(1-\gm(r))\le     \lm_{0}.
\end{align*}
\end{thm}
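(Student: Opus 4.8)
The plan is to bound $\lm_0$ from below through the Cheeger constant of the graph. Introduce the isoperimetric constant relative to the degree measure,
\[
\al=\inf_{W}\frac{|\partial W|}{\mathrm{vol}(W)},\qquad \mathrm{vol}(W)=\sum_{v\in W}\deg(v),
\]
the infimum over finite $W\subseteq V$, with $|\partial W|$ the number of edges leaving $W$. The starting point is the sharp Cheeger inequality for the normalized Laplacian $\Delta^{\mathrm{norm}}$ (bounded, hence covered by the framework of e.g.\ \cite{KL2}): $\lm_0(\Delta^{\mathrm{norm}})\ge 1-\sqrt{1-\al^2}$. To pass to our $\Delta$ I compare Rayleigh quotients: since the degree weighting uses $\deg(v)\ge d$, one has $\sum_v\deg(v)\ph(v)^2\ge d\sum_v\ph(v)^2$ for finitely supported $\ph$, whence $\lm_0(\Delta)\ge d\,\lm_0(\Delta^{\mathrm{norm}})\ge d(1-\sqrt{1-\al^2})$. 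As $s\mapsto 1-\sqrt{1-s^2}$ is increasing and $\gm(r)=\sqrt{1-(\tfrac{(1-r)^2}{1+r^2})^2}$, the asserted bound follows once I show
\[
\al\ge \frac{(1-r)^2}{1+r^2}.
\]

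The geometric input is a pointwise lower bound on degrees. Enumerating $V=\{v_k\}$ with $d_k=\deg(v_k)$ nondecreasing, the definition of $r$ gives $\eta(k)\le rk$, and since $\eta(d_j)\ge j+1$ I obtain $d_j\ge (j+1)/r$: the $j$-th smallest degree is at least $(j+1)/r$. Fix finite $W$ with $|W|=n$ and sort its vertex degrees as $a_0\le\cdots\le a_{n-1}$; since the sorted degree sequence of any $n$ vertices dominates the $n$ globally smallest degrees, $a_i\ge d_i\ge (i+1)/r$. Letting $E(W)$ denote the edges inside $W$ and $\deg_W(v)$ the number of neighbours of $v$ in $W$, the relation $2|E(W)|=\sum_{v\in W}\deg_W(v)\le\sum_i\min(a_i,n-1)$ yields
\[
\frac{|\partial W|}{\mathrm{vol}(W)}=1-\frac{2|E(W)|}{\mathrm{vol}(W)}\ge 1-\frac{\sum_i\min(a_i,n-1)}{\sum_i a_i}.
\]

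It therefore suffices to prove $\sum_i\min(a_i,n-1)\le \frac{2r}{1+r^2}\sum_i a_i$ for every sequence with $a_i\ge (i+1)/r$, and this is the step I expect to be the main obstacle. Maximizing the left-hand ratio forces an extremal profile in which the low-degree vertices are pushed up to the cap $n-1$ (so that the induced subgraph is essentially complete), while the remaining degrees sit at their minimum $(i+1)/r$; splitting the index set at $i\approx rn$ and summing the two arithmetic-type pieces produces the value $\frac{2r}{1+r^2}$ in the limit $n\to\infty$, and one must check it is not exceeded for finite $n$ (the boundary terms only help). This gives $\al\ge (1-r)^2/(1+r^2)$ and hence the stated lower bound for $\lm_0$.

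For the first assertion only $r_\infty<1$ is available. Note first that $r_\infty<\infty$ already forces $\eta(k)<\infty$ for every $k$, i.e.\ $d_\infty=\infty$ and a well-defined enumeration with $d_j\to\infty$. Choosing $r'\in(r_\infty,1)$ there is $k_0$ with $\eta(k)\le r'k$ for $k\ge k_0$, so $d_j\ge (j+1)/r'$ for all large $j$; running the estimate above on sets $W$ avoiding the finitely many vertices of degree $\le k_0$ bounds the isoperimetric constant at infinity from below by $\frac{(1-r')^2}{1+r'^2}>0$. A positive Cheeger constant at infinity makes the bottom of the essential spectrum of $\Delta^{\mathrm{norm}}$ positive; since $0$ is not an eigenvalue of $\Delta^{\mathrm{norm}}$ on a connected infinite graph, $\lm_0(\Delta^{\mathrm{norm}})>0$, and the comparison $\lm_0(\Delta)\ge d\,\lm_0(\Delta^{\mathrm{norm}})$ with $d\ge 1$ gives $\lm_0>0$.
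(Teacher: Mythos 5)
Your overall route coincides with the paper's: the ordering bound $d_j\ge(j+1)/r$ (which is Lemma~\ref{l:basic} with $s=r$, $n=0$), an isoperimetric lower bound $\al\ge 1-\frac{2r}{1+r^2}=\frac{(1-r)^2}{1+r^2}$, a Cheeger-type inequality giving $\lm_0\ge d\bigl(1-\sqrt{1-\al^2}\bigr)\ge d(1-\gm(r))$, and, for the statement under $r_\infty<1$ only, the at-infinity version of the same estimate combined with the fact that $0$ cannot be an eigenvalue since nonzero constants are not square summable on an infinite graph. All of these reductions are sound; in particular the comparison $\lm_0(\Delta)\ge d\,\lm_0(\Delta^{\mathrm{norm}})$ is just another way of using the inequality from \cite[Proof of Proposition~15]{KL2} that the paper invokes. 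The genuine gap is the step you yourself flag as the main obstacle: the inequality
\[
\sum_{i=0}^{n-1}\bigl(a_i\wedge(n-1)\bigr)\ \le\ \frac{2r}{1+r^2}\sum_{i=0}^{n-1}a_i
\qquad\text{whenever } a_i\ge\frac{i+1}{r},
\]
is supported only by an extremal-profile heuristic ("one must check it is not exceeded for finite $n$"), and this is the quantitative heart of the theorem --- it is the only place the constant $\frac{2r}{1+r^2}$, and hence $\gm(r)$, can come from. As written, the proof is therefore incomplete.

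The gap is closable, essentially along the lines you sketch. Since $\sum_i\bigl(a_i\wedge(n-1)\bigr)\le\sum_i a_i$, the ratio $\sum_i\bigl(a_i\wedge(n-1)\bigr)/\sum_i a_i$ is nondecreasing in each variable $a_i$ on $\bigl[\frac{i+1}{r},\,n-1\bigr]$ and nonincreasing for $a_i\ge n-1$; hence the coordinatewise maximizer $a_i^*=\frac{i+1}{r}\vee(n-1)$, which does not depend on the other coordinates, is a global maximizer (and it is automatically sorted). For this profile the numerator equals exactly $n(n-1)$, while the denominator obeys
\[
\sum_{k=1}^{n}\Bigl(\frac{k}{r}\vee(n-1)\Bigr)\ \ge\ \frac1r\int_0^{n}\bigl(t\vee r(n-1)\bigr)\,dt
\ =\ \frac{\bigl(r(n-1)\bigr)^2+n^2}{2r}\ \ge\ \frac{1+r^2}{2r}\,n(n-1),
\]
where the last step uses $n\ge r^2(n-1)$. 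This yields the claimed bound $\frac{2r}{1+r^2}$ for every finite $n$, not just asymptotically. Note that this is, almost verbatim, the computation in the paper's proof of Theorem~\ref{t:isoperimetric} (there with $N=n-1$, $s=r$); the paper even bypasses the extremal-profile discussion by estimating $\#\partial U\ge -N(N+1)+\sum_{v\in U}(\deg(v)\vee N)$ and $\mathrm{vol}(U)\le\sum_{v\in U}(\deg(v)\vee N)$ directly, which leads to the same integral. So nothing in your plan fails, but the decisive computation must actually be carried out for the argument to count as a proof.
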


Secondly, we give a criterion for discreteness of the spectrum.
Furthermore, we provide estimates on the corresponding Weyl and
eigenvalue asymptotics.

\begin{thm}\label{t:discrete} If $d_{\infty}=\infty$ and $r_{\infty}<1$,   then $\Delta$
has purely discrete spectrum and we have
\begin{align*}
1-\gm(r)\leq\liminf_{\lm\to\infty}\frac{N(\lm)}{\eta(\lm)}\le
\limsup_{\lm\to\infty}\frac{N(\lm)}{\eta(\lm)}\leq1+\gm(r)
\end{align*}
and
\begin{align*}
1-\gm(r_{\infty})\leq\liminf_{k\to\infty}\frac{\lm_{k}}{d_{k}}\le
\limsup_{k\to\infty}\frac{\lm_{k}}{d_{k}}\leq1+\gm(r_{\infty}).
\end{align*}
In particular, if $r_{\infty}=0$, then $\lm_{k}/d_{k}\to1$ as
$k\to\infty$.
\end{thm}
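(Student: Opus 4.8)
Write $\Deg$ for the multiplication operator $\Deg\ph(v)=\deg(v)\ph(v)$ and $A$ for the adjacency operator $A\ph(v)=\sum_{w\sim v}\ph(w)$, so that $\Delta=\Deg-A$ and, since $A$ has vanishing diagonal, $\Delta$ and $\Deg$ carry the same diagonal in the canonical basis $(\delta_{v})_{v}$. The spectrum of $\Deg$ is $\{d_{k}\}_{k\ge0}$, its eigenfunctions are $\delta_{v_{k}}$, and its eigenvalue counting function is precisely $\eta$. The whole plan is to compare $\Delta$ with $\Deg$, upgrading Theorem~\ref{t:bottom} from a bound on $\lm_{0}$ to a two-sided bound on the quadratic form. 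Passing to the normalized operator $\widetilde\Delta=\Id-\widetilde A$ with $\widetilde A=\Deg^{-1/2}A\Deg^{-1/2}$, the slow growth of $\eta$ controls the number of internal edges of any finite set $W$ (roughly, each $v\in W$ satisfies $\deg v\ge\#\{u\in W:\deg u\le\deg v\}/r$, so few edges can stay inside $W$), which gives a lower bound on the isoperimetric constant and, at infinity, on the isoperimetric constant at infinity. Feeding these into Cheeger-type inequalities for both the bottom and the top of $\widetilde\Delta$ should yield $\|\widetilde A\|\le\gm(r)$, and essential norm $\le\gm(r_{\infty})$ for the compression to $\ell^{2}(V\setminus K)$ with $K$ large, equivalently
\begin{align*}
(1-\gm(r))\,\as{\Deg\ph,\ph}\le\as{\Delta\ph,\ph}\le(1+\gm(r))\,\as{\Deg\ph,\ph}
\end{align*}
for finitely supported $\ph$, and the same with $\gm(r_{\infty})$ for $\ph$ supported off a large finite $K$. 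I expect the dual (top-of-spectrum) half, i.e.\ the upper comparison $\as{\Delta\ph,\ph}\le(1+\gm(r))\as{\Deg\ph,\ph}$, to be the more delicate of the two, requiring a dual Cheeger/bipartiteness estimate rather than the one already behind $\lm_{0}$.

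Discreteness then follows from the Persson-type characterization of $\inf\si_{\mathrm{ess}}(\Delta)$: for $\ph$ supported in $V\setminus K$ one has $\as{\Delta\ph,\ph}\ge(1-\gm(r_{\infty}))\as{\Deg\ph,\ph}\ge(1-\gm(r_{\infty}))\big(\min_{v\notin K}\deg v\big)\|\ph\|^{2}$; since $r_{\infty}<1$ forces $1-\gm(r_{\infty})>0$, while $d_{\infty}=\infty$ forces $\min_{v\notin K}\deg v\to\infty$ as $K\nearrow V$, we get $\inf\si_{\mathrm{ess}}(\Delta)=\infty$, hence purely discrete spectrum. The eigenvalue asymptotics use the at-infinity comparison via the min-max principle: modulo the finite-rank difference between $\Delta$ and its Dirichlet restriction to $\ell^{2}(V\setminus K)$, min-max gives $(1-\gm(r_{\infty})-\eps)d_{k}\le\lm_{k}\le(1+\gm(r_{\infty})+\eps)d_{k}$ for all large $k$ and every $\eps>0$, which yields the stated bounds on $\lm_{k}/d_{k}$ and, for $r_{\infty}=0$, the limit $1$.

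For the Weyl asymptotics the global comparison and min-max give the individual bounds $(1-\gm(r))d_{k}\le\lm_{k}\le(1+\gm(r))d_{k}$, and hence $\eta(\lm/(1+\gm(r)))\le N(\lm)\le\eta(\lm/(1-\gm(r)))$. These shifted bounds are, however, too weak to yield the asserted ratios directly: already in the model $\eta(\lm)=r\lm$ the upper one only gives $N(\lm)/\eta(\lm)\le 1/(1-\gm(r))>1+\gm(r)$. The extra input I would use is the rigidity that $\Delta$ and $\Deg$ share a diagonal: Schur--Horn majorization gives $\sum_{k<n}\lm_{k}\le\sum_{k<n}d_{k}$ for every $n$, equivalently the Riesz-mean inequality $\int_{0}^{\lm}\eta(s)\,ds\le\int_{0}^{\lm}N(s)\,ds$, while the upper form comparison yields the matching bound $\int_{0}^{\lm}N(s)\,ds\le\tr(\lm-(1-\gm(r))\Deg)_{+}$. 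Combining the band $\lm_{k}\in[(1-\gm(r))d_{k},(1+\gm(r))d_{k}]$ with these averaged identities and the defining relation $r=\sup_{k}\eta(k)/k$ should wash out the irregularity of $\eta$ and recover
\begin{align*}
1-\gm(r)\le\liminf_{\lm\to\infty}\frac{N(\lm)}{\eta(\lm)}\le\limsup_{\lm\to\infty}\frac{N(\lm)}{\eta(\lm)}\le1+\gm(r).
\end{align*}
Making this last averaging step quantitative is, I expect, the main obstacle of the whole theorem.
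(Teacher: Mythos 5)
Your route for discreteness of spectrum and for the eigenvalue asymptotics $\lm_{k}/d_{k}$ is essentially the paper's. The paper also compares $\Delta$ with the degree operator, converts the slow growth of $\eta$ into lower bounds on $\al$ and $\al_{\infty}$ (Theorem~\ref{t:isoperimetric}), feeds these into a two-sided Cheeger-type form comparison --- quoted from \cite[Proof of Proposition~15]{KL2} globally and from \cite[Theorem~5.3]{BGK1} at infinity, rather than re-derived via a dual Cheeger inequality as you propose --- and then invokes the min-max principle. Your Persson-type argument for discreteness and your min-max treatment of $\lm_{k}/d_{k}$ modulo the finite part $K$ are the same content in different packaging; these parts are fine.

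The genuine issue is the Weyl part, and there your proposal is incomplete --- you say yourself that the averaging step is ``the main obstacle''. But your critique of the direct route is correct, and it applies verbatim to the paper's own proof: the paper disposes of the counting-function asymptotics with the same one-line appeal to the min-max principle (citing \cite[Theorem~A.2]{BGK1} and \cite{Gol2} for the application), and what min-max actually yields from the form comparison is exactly the shifted bounds $\eta(\lm/(1+\gm(r)))\le N(\lm)\le\eta(\lm/(1-\gm(r)))$ you wrote down, which do not imply the asserted ratio bounds without some regularity of $\eta$. Your proposed repair, however, cannot close this gap. Majorization ($\sum_{k<n}\lm_{k}\le\sum_{k<n}d_{k}$) and Riesz means control \emph{averages} of the counting functions, and averaged information is blind to the plateau/jump structure of $\eta$, which is precisely what breaks the pointwise ratio.

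In fact there is a concrete obstruction showing that no argument can recover the stated ratio bounds from the hypotheses. Fix $r_{0}\in(0,1)$, let $\Lambda_{j+1}=\Lambda_{j}^{10}$ with $\Lambda_{1}$ large, and build a chain of cliques: batch $j$ is a complete graph on $m_{j}=\lceil r_{0}\Lambda_{j}\rceil$ vertices, each of which gets $\Lambda_{j}-m_{j}+1$ further edges into batch $j+1$, arranged so that every batch-$(j+1)$ vertex receives at most one such edge (after a trivial adjustment all degrees in batch $j$ equal $\Lambda_{j}$). Then $d_{\infty}=\infty$ and $r$ is as close to $r_{0}<1$ as desired. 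A mean-zero function $\psi$ on the $j$-th clique satisfies $\langle\Delta\psi,\psi\rangle=(\Lambda_{j}+1)\Vert\psi\Vert^{2}$ exactly, and for $\ph=\psi+\rho$ with $\rho$ supported on batches $\ge j+1$ one has $\langle\Delta\ph,\ph\rangle\ge(\Lambda_{j}+1)\Vert\psi\Vert^{2}+(1-\gm(r))\Lambda_{j+1}\Vert\rho\Vert^{2}-2\sqrt{\Lambda_{j}+1}\,\Vert\psi\Vert\Vert\rho\Vert$, since the batch-$j$-to-$(j+1)$ adjacency block has row sums $\le\Lambda_{j}$ and column sums $\le1$; because $(1-\gm(r))\Lambda_{j+1}\gg2\Lambda_{j}$, this quadratic is positive definite, so every nonzero $\ph$ orthogonal to (functions on batches $<j$) $\oplus$ (constants on batch $j$) has Rayleigh quotient $>\Lambda_{j}$. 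By Glazman's lemma $N(\Lambda_{j})\le m_{1}+\dots+m_{j-1}+1$, while $\eta(\Lambda_{j})=m_{1}+\dots+m_{j}$, whence $N(\Lambda_{j})/\eta(\Lambda_{j})\to0$, violating the asserted bound $\liminf N/\eta\ge1-\gm(r)>0$. So the difficulty you isolated is real and lies in the statement itself: in pointwise-ratio form the Weyl asymptotics require regularity of $\eta$ that the hypotheses do not provide, and the correct conclusion of both your approach and the paper's is the shifted form $\eta(\lm/(1+\gm(r)))\le N(\lm)\le\eta(\lm/(1-\gm(r)))$.
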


The results of the theorems above are based on the following
estimate of the isoperimetric constant $\al$ and the isoperimetric
constant at infinity $\al_{\infty}$ defined as
\begin{align*}
\al=\inf_{U\subseteq V \mbox{\scriptsize{ finite}}} \frac{\#\partial
U}{\mathrm{vol}(U)} \quad\mbox{and}\quad
\al_{\infty}=\sup_{K\subseteq V\mbox{\scriptsize{
finite}}}\inf_{U\subseteq V\setminus K \mbox{\scriptsize{ finite}}}
\frac{\#\partial U}{\mathrm{vol}(U)},
\end{align*}
where $\partial U= \{(v,w)\in U\times (V\setminus U)\mid v\sim w\}$
and $\mathrm{vol}(U)=\sum_{v\in U}\deg(v)$.

\begin{thm}\label{t:isoperimetric} Assume $d_{\infty}=\infty$. Then,
\begin{align*}
\al&\ge1-\frac{2r}{1+r^{2}} \quad\mbox{if}\quad
r \le 1  \quad\mbox{and} \\ \al_{\infty}&\ge1-\frac{2r_{\infty}}{1+r_{\infty}^{2}} \quad\mbox{if}\quad
r_{\infty} \le 1 .
\end{align*}
\end{thm}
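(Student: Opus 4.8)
The plan is to bound the number of interior edges of a finite set $U$ against its volume, since $\#\partial U$ and $\mathrm{vol}(U)$ differ by exactly twice the interior edge count. Write $n=\#U$, let $E(U)$ be the set of edges with both endpoints in $U$, and let $\deg_U(v)$ be the number of neighbours of $v$ lying in $U$. Counting the pairs in $\partial U$ by their endpoint inside $U$ gives $\#\partial U=\sum_{v\in U}(\deg(v)-\deg_U(v))=\mathrm{vol}(U)-2\#E(U)$, so that $\#\partial U/\mathrm{vol}(U)=1-2\#E(U)/\mathrm{vol}(U)$. Since $1-\frac{2r}{1+r^{2}}=\frac{(1-r)^{2}}{1+r^{2}}$, it suffices to show $2\#E(U)\le\frac{2r}{1+r^{2}}\mathrm{vol}(U)$. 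Because each vertex has at most $\min(\deg(v),n-1)$ neighbours inside a set of $n$ vertices, I get the crude but decisive estimate $2\#E(U)=\sum_{v\in U}\deg_U(v)\le\sum_{v\in U}\min(\deg(v),n-1)$.

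The geometric hypothesis enters only through a pointwise lower bound on the degrees occurring in $U$. Enumerate the vertices of $U$ as $w_{1},\dots,w_{n}$ with $\delta_{i}:=\deg(w_{i})$ nondecreasing. Since $w_{1},\dots,w_{i}$ are $i$ distinct vertices of degree at most $\delta_{i}$, we have $i\le\eta(\delta_{i})\le r\delta_{i}$, i.e.\ $\delta_{i}\ge i/r$. The theorem thus reduces to the elementary inequality: if $0<r\le1$ and $\delta_{1}\le\dots\le\delta_{n}$ satisfy $\delta_{i}\ge i/r$, then $\sum_{i=1}^{n}\min(\delta_{i},n-1)\le\frac{2r}{1+r^{2}}\sum_{i=1}^{n}\delta_{i}$.

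To prove this I would put $c=n-1$ and $\phi(x)=\frac{2r}{1+r^{2}}x-\min(x,c)$, which is convex and piecewise linear with slope $-\frac{(1-r)^{2}}{1+r^{2}}<0$ on $[0,c]$ and slope $\frac{2r}{1+r^{2}}>0$ on $[c,\infty)$, hence attains its global minimum at $x=c$. The desired inequality is exactly $\sum_{i}\phi(\delta_{i})\ge0$, and since the constraints $\delta_{i}\ge i/r$ are decoupled I may bound each summand by its own minimum, $\phi(\delta_{i})\ge\min_{x\ge i/r}\phi(x)$, which equals $\phi(c)=-\frac{(1-r)^{2}}{1+r^{2}}c$ when $i\le rc$ and equals $\phi(i/r)=\frac{2i}{1+r^{2}}-c$ when $i>rc$. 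Summing these termwise minima and using $\sum_{i}i=\tfrac{n(n+1)}2$ produces a quantity which in the extremal profile $\delta_{i}=\max(c,i/r)$ evaluates to exactly $0$; this vanishing is precisely the identity that pins down the constant $\frac{2r}{1+r^{2}}$, and I expect carrying out this summation and verifying nonnegativity for every $n$ to be the delicate, computational heart of the argument, since the cap $c=n-1$ and the floor $\lfloor rc\rfloor$ must be tracked (the continuum computation gives $0$ and the discrete correction turns out favourable).

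For the statement at infinity I would run the same argument on sets $U\subseteq V\setminus K$. Fixing $\eps>0$, the definition of $r_{\infty}$ provides $k_{0}$ with $\eta(k)\le(r_{\infty}+\eps)k$ for all $k\ge k_{0}$; since $d_{\infty}=\infty$, only finitely many vertices have degree below $k_{0}$, so I may take $K$ to contain all of them. Then every $w_{i}\in U\subseteq V\setminus K$ satisfies $\delta_{i}\ge k_{0}$ and $i\le\eta(\delta_{i})\le(r_{\infty}+\eps)\delta_{i}$, whence $\delta_{i}\ge i/(r_{\infty}+\eps)$; the lemma with $r$ replaced by $r_{\infty}+\eps$ gives $\#\partial U/\mathrm{vol}(U)\ge1-\frac{2(r_{\infty}+\eps)}{1+(r_{\infty}+\eps)^{2}}$ for all such $U$. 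Taking the infimum over $U$, then the supremum over $K$, and finally letting $\eps\to0$ using continuity of $t\mapsto1-\frac{2t}{1+t^{2}}$ yields $\al_{\infty}\ge1-\frac{2r_{\infty}}{1+r_{\infty}^{2}}$.
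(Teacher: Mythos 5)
Your proposal is correct in outline, and the one step you explicitly left unverified does in fact hold, so the argument goes through. Up to that final computation, your proof runs parallel to the paper's: your identity $\#\partial U=\mathrm{vol}(U)-2\#E(U)$ combined with $2\#E(U)\le\sum_{v\in U}\min(\deg(v),n-1)$ is exactly the paper's opening estimate $\#\partial U\ge\sum_{v\in U}(\deg(v)-N)_{+}$ with $N=n-1$, and your sorted-degree bound $\delta_i\ge i/r$ is precisely Lemma~\ref{l:basic}. The two arguments part ways in how the constant $\tfrac{2r}{1+r^{2}}$ is extracted. The paper performs a double relaxation: it bounds the interior-edge deficit crudely by $N(N+1)$ and \emph{simultaneously} enlarges the denominator to $\sum_{v\in U}(\deg(v)\vee N)$, so that only a lower bound on the single monotone quantity $\sum(\deg\vee N)$ is needed; that bound is a one-line integral comparison, $\sum(\deg\vee N)\ge\tfrac1s\int_0^{N+1}(k\vee sN)\,dk=\tfrac{1+s^2}{2s}N(N+1)$, which is nothing but the continuum volume of your extremal profile $\max(c,i/r)$. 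You instead keep the exact ratio and minimize it termwise over the constraint set, which is sharper in spirit but pushes all the difficulty into a discrete sum with floors to track; the paper's relaxation buys a much shorter computation. Your treatment of $\alpha_\infty$ (delete the finitely many vertices of degree below $k_0$, run the finite argument with $r_\infty+\varepsilon$, let $\varepsilon\to0$) matches the paper's use of the finite set $K$; note only that both your lemma and the paper's choice of $s\in(r_\infty,1]$ implicitly require $r_\infty<1$, the case $r_\infty=1$ being trivial since the claimed bound is then $0$.

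Since you called the discrete summation ``the delicate, computational heart'' and only conjectured that the correction is favourable, let me close that gap. With $c=n-1$, $m=\lfloor rc\rfloor$ and $\theta=rc-m\in[0,1)$, summing your termwise minima gives
\begin{align*}
(1+r^{2})\sum_{i=1}^{n}\min_{x\ge i/r}\phi(x)
&=-mc(1-r)^{2}+\sum_{i=m+1}^{n}\bigl(2i-c(1+r^{2})\bigr)\\
&=-mc(1-r)^{2}+(c+1-m)\bigl(m+2-cr^{2}\bigr)\\
&=c(1-r)(2+r)+(1+\theta)(2-\theta)\ \ge\ 2\ >\ 0,
\end{align*}
where the second equality uses $n=c+1$, the third is a direct expansion after substituting $m=rc-\theta$, and positivity uses $r\le1$ and $0\le\theta<1$. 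Hence $\sum_i\phi(\delta_i)>0$ for every $n$ and every admissible degree profile, your elementary inequality holds (even strictly), and with it the whole proof.
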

The proofs of these theorems are found in
Section~\ref{s:isoperimetric}. Let us mention that the results above
are sharp in the sense that for given $s\in(0,1)$ there are graphs
$G_{s}$ such that $s=r_{\infty}$ and such that the inequality for
$\al_{\infty}$ is an equality.

\begin{rem} Next to the operator $\Delta$, one often  considers the
normalized Laplacian $\Delta_{n}$ that is the bounded, positive,
selfadjoint operator  acting as
\begin{align*}
    \Delta_{n}\ph(v)=\frac{1}{\deg(v)}\sum_{w\sim v}(\ph(v)-\ph(w))
\end{align*}
on $\ell^{2}(V,\deg)$. For example this operator is studied in
\cite{DK,Fu2}. By the considerations of \cite{Fu2} and
Theorem~\ref{t:isoperimetric} above,  we obtain the spectral
estimates $\lm_{0}(\Delta_{n})\ge\gm(r)$ and
$\lm_{0}^{\mathrm{ess}}(\Delta_{n})\ge 1-\gm(r_{\infty})$, where
$\lm_{0}(\Delta_{n})$ and $\lm_{0}^{\mathrm{ess}}(\Delta_{n})$ is
the bottom of the spectrum and the bottom of the essential spectrum
of $\Delta_{n}$. Furthermore, if $r_{\infty}=0$, then the essential
spectrum of $\Delta_{n}$ is equal to $\{1\}$.
\end{rem}

Next to isoperimetric estimates, we show a lower exponential volume
growth bound for graphs with $r_{\infty}<1$. For a given vertex
$v\in V$, we denote by $B_{n}$ the vertices which can be connected to
$v$ by a path of less or equal to $n$ edges.

\begin{thm}\label{t:volume}
Assume $d_{\infty}=\infty$ and
$0<r_{\infty} < 1$.
Let $a$ be the largest real root of the polynomial
\[
p(z) = z^3 - r_\infty^{-1} z^2 - 1.
\]
Then,
$$\liminf_{n\to\infty}\frac{1}{n}\log\mathrm{vol}(B_{n})\ge 2\, \log a.$$
In particular, $a \ge r_\infty^{-1}$.
\end{thm}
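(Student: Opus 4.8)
The plan is to reduce everything to the growth of the ball cardinalities $\beta_n:=\#B_n$, separating the argument into a combinatorial recursion for $\beta_n$ and an elementary volume-versus-cardinality estimate. Fix $\eps>0$ with $r_\infty+\eps<1$ and choose $K$ so that $\eta(k)\le(r_\infty+\eps)k$ for all $k\ge K$; since $G$ is infinite, connected and locally finite, $\beta_n\to\infty$, and since $d_\infty=\infty$ the maximal degree on $B_n$ tends to infinity as well. Writing $S_n=B_n\setminus B_{n-1}$ and $\sigma_n=\#S_n$, I first record that $B_n$ must contain a vertex of large degree: if every vertex of $B_n$ had degree at most $k$, then $\beta_n\le\eta(k)\le(r_\infty+\eps)k$ whenever $k\ge K$; applied to $k=\Delta_n:=\max_{v\in B_n}\deg(v)$ this gives $\Delta_n\ge\beta_n/(r_\infty+\eps)$ for all large $n$.

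The key step, and the place where the cubic polynomial is born, is to locate this high-degree vertex in the outermost sphere. Let $u\in B_n$ realize $\deg(u)=\Delta_n$ and suppose $u\in S_i$. All neighbours of $u$ lie in $S_{i-1}\cup S_i\cup S_{i+1}$, hence $\sigma_{i-1}+\sigma_i+\sigma_{i+1}=\beta_{i+1}-\beta_{i-2}\ge\Delta_n\ge\beta_n/(r_\infty+\eps)$. If we had $i<n$, then $\beta_{i+1}\le\beta_n$ would force $\beta_{i+1}\ge\beta_{i+1}/(r_\infty+\eps)$, i.e. $r_\infty+\eps\ge1$, contradicting the choice of $\eps$. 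Therefore $u\in S_n$. Counting the neighbours of $u$ landing in $S_{n+1}$ then yields $\sigma_{n+1}\ge\Delta_n-\sigma_{n-1}-\sigma_n$, and since $\beta_n-\sigma_n-\sigma_{n-1}=\beta_{n-2}$ this produces the three-term recursion
\[
\beta_{n+1}\ \ge\ \frac{1}{r_\infty+\eps}\,\beta_n+\beta_{n-2}
\]
for all large $n$.

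It remains to read off the growth rate. The characteristic polynomial of this recursion is $z^3-(r_\infty+\eps)^{-1}z^2-1$, whose largest real root I call $a_\eps$; a routine induction, choosing $\delta>0$ so small that $\beta_j\ge\delta a_\eps^{\,j}$ holds on three consecutive base indices and using $a_\eps^3=(r_\infty+\eps)^{-1}a_\eps^2+1$, shows $\beta_n\ge\delta a_\eps^{\,n}$, whence $\liminf_n\frac1n\log\beta_n\ge\log a_\eps$. Letting $\eps\downarrow0$ and noting $a_\eps\to a$ gives $\liminf_n\frac1n\log\beta_n\ge\log a$. I then convert this into a volume bound by a layer-cake computation: since at most $(r_\infty+\eps)k$ vertices have degree $\le k$, the $\beta_n$ vertices of $B_n$ satisfy
\[
\mathrm{vol}(B_n)=\sum_{v\in B_n}\deg(v)\ \ge\ \frac{\beta_n^2}{2(r_\infty+\eps)}\bigl(1-o(1)\bigr),
\]
so that $\liminf_n\frac1n\log\mathrm{vol}(B_n)\ge2\liminf_n\frac1n\log\beta_n\ge2\log a$; the multiplicative constant is irrelevant for the exponential rate, which is exactly why the factor $2$ appears. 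The last assertion $a\ge r_\infty^{-1}$ is immediate from $p(r_\infty^{-1})=-1<0$ together with $p(+\infty)=+\infty$.

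The main obstacle is the localization step: a priori the high-degree vertices forced by the slow growth of $\eta$ could sit deep inside $B_n$, which would reproduce only the weaker rate $r_\infty^{-1}$. The observation that $r_\infty+\eps<1$ forbids this, pushing the maximal degree into $S_n$ and thereby spilling $\Delta_n-\sigma_{n-1}-\sigma_n$ new vertices into $S_{n+1}$, is precisely what upgrades the bound to the cubic root $a$. A secondary point requiring care is the comparison with the linear recursion and the limit $\eps\downarrow0$, but these are routine once the recursion $(\heartsuit)$-type inequality is established.
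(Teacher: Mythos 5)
Your proof is correct and follows essentially the same route as the paper's: the slow growth of $\eta$ forces a vertex of degree $\ge \beta_n/(r_\infty+\eps)$ in $B_n$, which must lie in the outermost sphere (else $r_\infty+\eps\ge 1$), yielding the recursion $\beta_{n+1}\ge (r_\infty+\eps)^{-1}\beta_n+\beta_{n-2}$ whose characteristic cubic gives the rate $a_\eps$, and the quadratic bound $\mathrm{vol}(B_n)\gtrsim \beta_n^2$ doubles the exponent. The only cosmetic difference is that you extract the growth rate by direct induction from $a_\eps^3=(r_\infty+\eps)^{-1}a_\eps^2+1$, where the paper invokes the Perron--Frobenius theorem for the companion matrix of the same recursion.
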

This result is proven in Section~\ref{s:volume}. Of course, there
are many examples of graphs with $r_{\infty}>0$ with
superexponential volume growth such as trees. However, in
Section~\ref{s:volume} we present also an example showing that the
result is sharp.

Finally, we turn to a property of graphs called stochastic
completeness. In the discrete setting the investigation of this topic goes
back to Feller \cite{Fe1,Fe2} and Reuter \cite{Reu}. Recent interest
in this topic was sparked by the Ph.D. thesis of  Wojciechowski
\cite{Woj1}, see also \cite{GHM,Hu1,Hu2,Fol3,KLW,Woj2,Woj3} and
references therein.

A graph is said to be \emph{stochastically complete} if
$$e^{-t\Delta}1=1,\qquad t\ge0,$$
where $e^{-t\Delta}$ is the $\ell^{2}$ semigroup of $\Delta$ which
is extended to the bounded functions via monotone limits and $1$
denotes the function that is constantly one on $V$. For details, see
\cite{Woj1}. Note that by general theory one always has
$e^{-t\Delta}\leq 1$ and a graph is  said to be \emph{stochastically
incomplete} if
\begin{align*}
e^{-t\Delta}1<1,\mbox{ for some (all) }t>0.
\end{align*}

The importance of this property stems from the fact that stochastic completeness is
equivalent to uniqueness of bounded solutions to the heat equation,
see \cite[Theorem 3.1.3]{Woj1}. Intuitively, stochastic incompleteness can be understood that heat vanishes from the graph in finite time.

Here, we give a new criterion for stochastic incompleteness.

\begin{thm}\label{t:sc} Let $d_{\infty}=\infty$. If
$r_{\infty}<e^{-1}$, then the graph is stochastically incomplete.
\end{thm}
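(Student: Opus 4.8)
The plan is to realise stochastic incompleteness as explosion of the associated minimal continuous time random walk and to force that walk out to infinity using the control on the degree distribution encoded in $r_{\infty}<e^{-1}$. Recall from the theory behind \cite{Woj1} that $G$ is stochastically incomplete precisely when the minimal process generated by $-\Delta$ explodes with positive probability. This process has the simple random walk $(X_{n})_{n\ge0}$ on $G$ (with $X_{n+1}$ a uniformly chosen neighbour of $X_{n}$) as jump chain, and its $n$-th holding time is $\xi_{n}/\deg(X_{n})$ with $(\xi_{n})$ i.i.d.\ $\mathrm{Exp}(1)$ independent of $(X_{n})$. Since $\mathbb{E}\big[\sum_{n}\xi_{n}/\deg(X_{n})\mid(X_{n})\big]=\sum_{n}1/\deg(X_{n})$, it suffices to produce an event of positive probability on which $\sum_{n}1/\deg(X_{n})<\infty$; there the explosion time is finite.

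Fix $\rho\in(r_{\infty},e^{-1})$. Because $r_{\infty}<\rho$ and $d_{\infty}=\infty$, there is a finite set $K\subseteq V$ and a threshold $k_{0}$ with $\eta(s)\le\rho s$ for $s\ge k_{0}$ and $\deg(v)\ge N_{0}\ge k_{0}$ for every $v\notin K$. The decisive combinatorial input is a worst case lower bound on neighbour degrees: if $v\notin K$ has degree $d=\deg(v)$ and its neighbours have degrees $e_{(1)}\le\dots\le e_{(d)}$, then $j\le\#\{w\sim v:\deg(w)\le e_{(j)}\}\le\eta(e_{(j)})\le\rho\,e_{(j)}$, hence $e_{(j)}\ge j/\rho$. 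Using this for the function $w(v):=\deg(v)^{-\beta}$ with a $\beta\in(0,1)$ to be chosen, one gets for $v\notin K$
\begin{align*}
\frac{1}{\deg(v)}\sum_{w'\sim v}w(w')=\frac1d\sum_{j=1}^{d}e_{(j)}^{-\beta}\le\frac{\rho^{\beta}}{d}\sum_{j=1}^{d}j^{-\beta}\le\Big(\frac{\rho^{\beta}}{1-\beta}+o(1)\Big)w(v),
\end{align*}
the last estimate by comparing $\sum_{j\le d}j^{-\beta}$ with $\int_{0}^{d}x^{-\beta}\,dx$ (the finitely many small $j$ and the vertices of $K$ contribute only to the $o(1)$). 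The hypothesis enters exactly here: the prefactor $q(\beta):=\rho^{\beta}/(1-\beta)$ satisfies $q(0)=1$ and $\tfrac{d}{d\beta}\log q(0)=1+\log\rho<0$ if and only if $\rho<e^{-1}$, so some $\beta\in(0,1)$ yields $q(\beta)<1$. After enlarging $K$ to absorb the $o(1)$, the estimate reads $\mathbb{E}[w(X_{n+1})\mid X_{n}=v]\le q\,w(v)$ with $q<1$ for all $v\notin K$; that is, $w$ is strictly excessive off $K$ and $w(X_{n})$ is a nonnegative supermartingale contracting by $q$ at each step spent outside $K$.

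It remains to run the walk to infinity. Starting at a vertex $v_{0}\notin K$ of very large degree and writing $\tau$ for the hitting time of $K$, the stopped process $w(X_{n\wedge\tau})$ is a nonnegative supermartingale, and hitting $K$ forces $w\ge(\max_{v\in K}\deg v)^{-\beta}$. The maximal inequality therefore bounds $\mathbb{P}_{v_{0}}(\tau<\infty)\le\big(\max_{v\in K}\deg v/\deg v_{0}\big)^{\beta}$, which is $<1$ once $\deg v_{0}$ is large enough. On the complementary event $\{\tau=\infty\}$ of positive probability the contraction holds at every step, so $\sum_{n}w(X_{n})<\infty$ almost surely there, whence $\sum_{n}\deg(X_{n})^{-\beta}<\infty$ and a fortiori $\sum_{n}\deg(X_{n})^{-1}<\infty$. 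By the reduction of the first paragraph the minimal process explodes with positive probability, and thus $G$ is stochastically incomplete.

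The main obstacle is the uniform estimate of the second paragraph: one must control the entire neighbour degree profile of an \emph{arbitrary} vertex using only $\eta(s)\le\rho s$, and then verify that the contraction constant $q(\beta)$ can be pushed below $1$ precisely under the threshold $\rho<e^{-1}$, while carefully handling the boundary terms for small $j$ and the finitely many low degree vertices collected in $K$. Converting the one step contraction into a genuine escape to infinity with positive probability via the maximal inequality is the second, more routine, point; that stochastic completeness is unaffected by the finite exceptional set $K$ is exactly what makes the localisation to $v\notin K$ legitimate.
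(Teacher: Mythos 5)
Your proposal is correct, and it reaches the result by a genuinely different route than the paper. The combinatorial and asymptotic core is identical in both arguments: for a vertex $v$ outside a finite set, the sorted neighbour degrees satisfy $e_{(j)}\ge j/\rho$ (this is Lemma~\ref{l:basic} applied to the neighbours of $v$), an integral comparison converts this into the factor $\rho^{\beta}/(1-\beta)$, and the threshold $e^{-1}$ appears because $(1-\beta)^{1/\beta}\to e^{-1}$ as $\beta\to0$; your condition $q(\beta)=\rho^{\beta}/(1-\beta)<1$ is precisely the paper's condition $s^{p}/(1-p)\le 1$. The difference is the framework this estimate feeds into. The paper argues analytically: it checks that $u(v)=1-(\deg(v)+\lambda)^{-p}$ is a bounded, positive, $\lambda$-subharmonic function outside a finite set and then invokes the criterion of [KL2, Theorem~25, Proposition~28] together with the localization results [KL1, Corollary~1.2] and [Hu11, Theorem~4.1], which allow the finite exceptional set. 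You argue probabilistically with the minimal continuous-time chain: your Lyapunov function $w(v)=\deg(v)^{-\beta}$, which is essentially the paper's $1-u$ with $\lambda=0$, contracts in expectation by $q<1$ off $K$; the supermartingale maximal inequality yields escape from $K$ with positive probability when started at a vertex of sufficiently large degree (such a vertex exists since $d_{\infty}=\infty$); the induction $\mathbb{E}\big[w(X_{n+1})\mathbf{1}_{\{\tau>n+1\}}\big]\le q\,\mathbb{E}\big[w(X_{n})\mathbf{1}_{\{\tau>n\}}\big]$ gives $\sum_{n}\deg(X_{n})^{-1}\le\sum_{n}\deg(X_{n})^{-\beta}<\infty$ on the escape event; and finiteness of the conditional expected lifetime forces explosion there. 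What your route buys: it is self-contained modulo the standard identification of stochastic incompleteness with explosion of the minimal process, and in particular your escape argument via the maximal inequality replaces the cited localization results for the exceptional set. What the paper's route buys: given its cited machinery it is considerably shorter, and it needs no probabilistic apparatus (jump chain, holding times, conditional expectations) at all. The only places where your sketch compresses real but routine work are the uniform treatment of the at most $\eta(k_{0})$ neighbours of sub-threshold degree --- a constant independent of $v$, absorbed after dividing by $\deg(v)$ and enlarging $K$, whereas the paper avoids this issue altogether by putting all neighbours of the low-degree vertices into $K$ --- and the displayed induction above, which you assert rather than perform; both gaps close exactly as you indicate.
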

The constant $e$ in the theorem denotes the Euler number. The proof
is given in Section~\ref{s:sc}.

The authors conjecture that the statement of the theorem remains
true for $r_{\infty}<1$. However, the idea of proof given here does
not extend to this situation.

\section{Isoperimetric constants}\label{s:isoperimetric}
In this section we show Theorem~\ref{t:isoperimetric} from which we
deduce Theorem~\ref{t:bottom} and Theorem~\ref{t:discrete}.

We start with a basic but important  fact which will be used
successively throughout the paper. Recall that we enumerated the vertices $V=\{v_{k}\}_{k\ge0}$ with respect to increasing vertex degree.

\begin{lemma}\label{l:basic} For $s>r_{\infty}$ there is $n\ge0$ such that for  $k\ge n$ we have
\begin{align*}
    \frac{(k+1)}{s}\leq\deg(v_{k}).
\end{align*}
\end{lemma}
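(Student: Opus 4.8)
The plan is to exploit the fact that the hypothesis $s>r_{\infty}$ is a statement about the counting function $\eta$, whereas the desired conclusion is a lower bound on $\deg(v_{k})$; the two are linked by the elementary observation that among $v_{0},\dots,v_{k}$ every vertex has degree at most $\deg(v_{k})$, so that
\begin{align*}
\eta(\deg(v_{k}))\ge k+1
\end{align*}
for every $k\ge0$. This counting inequality is the only structural input; everything else is a matter of unwinding the definition of the limit superior.

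First I would record what $s>r_{\infty}$ gives. Since $r_{\infty}=\limsup_{m\to\infty}\eta(m)/m$ and $s-r_{\infty}>0$, the definition of the limit superior yields a threshold $K_{0}>0$ such that
\begin{align*}
\eta(m)<s\,m\qquad\text{for all }m\ge K_{0}.
\end{align*}
Here one uses implicitly that, because $d_{\infty}=\infty$, only finitely many vertices have degree below any fixed bound, so $\eta$ is finite on finite arguments and the quotient $\eta(m)/m$ is genuinely defined for large $m$.

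Next I would use rapid branching to push the argument of $\eta$ above this threshold. Since $d_{\infty}=\infty$ we have $\deg(v_{k})=d_{k}\nearrow\infty$, so there is $n\ge0$ with $\deg(v_{k})\ge K_{0}$ for all $k\ge n$. Applying the previous bound at $m=\deg(v_{k})$ and combining it with the counting inequality gives, for $k\ge n$,
\begin{align*}
k+1\le\eta(\deg(v_{k}))<s\,\deg(v_{k}),
\end{align*}
whence $(k+1)/s<\deg(v_{k})$, which is even slightly stronger than the claimed inequality.

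I do not expect a genuine obstacle here: the statement is essentially a repackaging of the definition of $r_{\infty}$. The only points demanding care are getting the direction of the counting inequality right (so that $\eta$ evaluated at $\deg(v_{k})$ counts at least the $k+1$ vertices $v_{0},\dots,v_{k}$) and noting explicitly that $\deg(v_{k})\to\infty$ — which is exactly where the standing hypothesis $d_{\infty}=\infty$ enters — so that the asymptotic bound on $\eta$ is applicable for all sufficiently large $k$.
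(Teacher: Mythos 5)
Your proof is correct and follows essentially the same route as the paper's: the counting inequality $k+1\le\eta(\deg(v_{k}))$ from the inclusion $\{v_{0},\dots,v_{k}\}\subseteq\{v\in V\mid \deg(v)\le\deg(v_{k})\}$, combined with the definition of $r_{\infty}$ as a limit superior and the fact that $\deg(v_{k})\to\infty$ to ensure the asymptotic bound on $\eta$ applies. The paper's proof is merely a terser version of the same argument; your explicit handling of the threshold $K_{0}$ and of where $d_{\infty}=\infty$ enters is a faithful unpacking of what the paper leaves implicit.
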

\begin{proof}
By the inclusion $\{v_{0},\ldots,v_{k}\}\subseteq\{v\in V\mid
\deg(v)\leq\deg(v_{k})\}$ we have
\begin{align*}
    (k+1)\leq \eta(\deg(v_{k})).
\end{align*}
Let now $n$ be such that $\eta(\deg(v_{k}))\leq s \deg(v_{k})$ for
$k\ge n$. Combining this with the inequality above yields the
statement.
\end{proof}

Now, we give the proof of Theorem~\ref{t:isoperimetric}. We recall
the notation  $a\wedge b= \min\{a,b\}$, $a\vee b= \max\{a,b\}$ and
$a_{+}=a\vee 0$ for numbers $a,b\in\R$.

\begin{proof}[Proof of Theorem~\ref{t:isoperimetric}]
Let $U\subseteq V$ and let $N\ge0$ be such that $\#U=N+1$. Since
every vertex in $U$ can be connected to at most $N$ vertices within
$U$, we estimate
\begin{align*}
    \#\partial U\geq\sum_{v\in U}(\deg(v)-N)_{+}= -N(N+1)+\sum_{v\in U}(\deg(v)\vee N),
\end{align*}
and
\begin{align*}    \mathrm{vol}(U)\leq\sum_{v\in U}(\deg(v)\vee N).
\end{align*}
Hence,
\begin{align*}
    \frac{\#\partial U}{\mathrm{vol}(U)}\ge 1-\frac{N(N+1)}{\sum_{v\in U}(\deg(v)\vee N)}.
\end{align*}
For $s\in(r_{\infty},1]$, let $n\ge1$ be chosen according to
Lemma~\ref{l:basic}.
For $K=\{v_{0},\ldots, v_{n-1}\}$ and $U
\subseteq V\setminus K$ with $\#U=N+1$, we conclude by
Lemma~\ref{l:basic}
\begin{align*}
\sum_{v\in U}(\deg(v)\vee N) &\ge\sum_{k=n}^{n+N}(\deg(v_{k})\vee N)
\ge \sum_{k=n}^{n+N}\left(\frac{(k+1)}{s}\vee N\right)\\
&\ge \sum_{k=1}^{N+1}\left(\frac{k}{s}\vee N\right)
\ge\frac{1}{s}\int_{0}^{N+1}(k\vee sN)dk\\
&=\frac{1}{s}\Big((sN)^{2}+\frac{1}{2}\left((N+1)^{2}-(sN)^{2}\right)\Big)\\
&\ge \frac{s^{2}+1}{2s}N(N+1).
\end{align*}
Plugging this in the inequality for  $\#\partial
U/{\mathrm{vol}(U)}$ above, we conclude the statement for
$\al_{\infty}$. The corresponding statement for $\al$ follows
analogously by letting $s=r$ and $n=0$.
\end{proof}

The proofs of Theorem~\ref{t:bottom} and Theorem~\ref{t:discrete}
are based on Cheeger estimates for which there is a huge body of
literature, see e.g.\@ \cite{Do,DK,Fu2,Mo88,Mo91,KL2}.
We denote the functions of finite support on $V$ by $C_{c}(V)$ and
denote the scalar product of $\ell^{2}(V)$ by
$\langle\cdot,\cdot\rangle$.

\begin{proof}[Proof of  Theorem~\ref{t:bottom} and Theorem~\ref{t:discrete}]
The following inequality  can be directly extracted from
\cite[Proof of Proposition~15]{KL2}
\begin{align*}
    (1-\sqrt{1-\al^{2}})\langle\deg \ph,\ph\rangle\leq
\langle\Delta \ph,\ph\rangle
    \leq    (1+\sqrt{1-\al^{2}})\langle\deg \ph,\ph\rangle,
\end{align*}
for all $\ph\in C_{c}(V)$. This yields the bound in
Theorem~\ref{t:bottom}. Furthermore, from \cite[Theorem~5.3]{BGK1}
it follows that for all $\eps>0$ there is $C_{\eps}>0$ such that for
all normalized functions $\ph$ with finite support
\begin{align*}
  (1-\eps)(1-\sqrt{1-\al_{\infty}^{2}})\langle\deg \ph&,\ph\rangle -C_{\eps}\leq
\langle\Delta \ph,\ph\rangle\\
    &\leq   (1+\eps) (1+\sqrt{1-\al_{\infty}^{2}})\langle\deg \ph,\ph\rangle+C_{\eps}.
\end{align*}
By the Min-Max-Principle \cite[Chapter~XIII.1]{RS} (confer
\cite[Theorem~A.2]{BGK1} or \cite{Gol2} for the details of the
application) we deduce the statement about discreteness of spectrum
as well as the Weyl and eigenvalue asymptotics. This proves
Theorem~\ref{t:discrete}. Finally, as the spectrum is purely
discrete if $r_{\infty}<1$, we deduce that $\lm_{0}>0$. To see this,
assume for a moment that
$\lm_{0}=0$ is an eigenvalue. Noting
that
\[
0 = \langle \Delta \varphi, \varphi \rangle =
\frac{1}{2}\sum_{x\sim y} \big( \varphi(x) - \varphi(y) \big)^2
\]
for the underlying eigenfunction $\varphi$ shows that $\varphi$
must be a constant function. However, the only constant function
in $\ell^2(V)$ is the zero function. Hence, $\varphi$ is not an
eigenfunction and $0$ cannot be an eigenvalue.
This finishes the proof of Theorem~\ref{t:bottom}.
\end{proof}

Let us turn to an example which shows that the bound in
Theorem~\ref{t:isoperimetric} above is sharp.

\begin{eg} We construct a graph for given $s\in(0,1)$ as follows. Let
$K_{n}=(V_{n},E_{n})$ be complete graphs with $l(n)$ vertices, where
$l(0)=2$ and $$l(n+1)=\lceil s^{-1}\rceil l(n)^{2},\qquad n\ge0.$$
Enumerate the vertices in $K_{n}$ by
$v_{1}^{(n)},\ldots,v_{l(n)}^{(n)}$ and connect $v_{j}^{(n)}$ with
exactly $(\lceil js^{-1}\rceil-l(n))_{+}$ vertices in $K_{n+1}$ such
that every vertex in $K_{n+1}$ is connected to at most one vertex in
$K_{n}$. This is possible since
\begin{align*}
    \sum_{j=1}^{l(n)}(\lceil js^{-1}\rceil-l(n))_{+}\leq \lceil s^{-1}\rceil
    l(n)^{2}=l(n+1).
\end{align*}
We denote the resulting graph by $G_{s}=(V,E)$. We show that
\begin{align*}
    \limsup_{k\to\infty}\frac{\eta(k)}{k}=s\quad\mbox{and}\quad
    \al_{\infty}=1-\frac{2s}{1+s^{2}}.
\end{align*}
First, we observe that it suffices to show
\begin{align*}
    \limsup_{k\to\infty}\frac{\eta(k)}{k}\leq s\quad\mbox{and}\quad
    \al_{\infty}\leq 1-\frac{2s}{1+s^{2}}.
\end{align*}
Indeed, if $\tilde s=    \limsup_{k\to\infty}\frac{\eta(k)}{k}\leq
s$, then  by Theorem~\ref{t:isoperimetric} we infer
\begin{align*}
 1-\frac{2 s}{1+ s^{2}}
    \leq    1-\frac{2\tilde s}{1+\tilde s^{2}}\leq    \al_{\infty}\leq    1-\frac{2 s}{1+ s^{2}}.
\end{align*}
This implies $\tilde s=s$ and $\alpha_{\infty} =
1- 2s / (1 + s^2)$. For the vertices
$v_{1}^{(n)},\ldots,v_{l(n)}^{(n)}$ in $V_{n}$, we observe for the
vertex degrees in $G_{s}$
\begin{align*}
    \lceil js^{-1}\rceil \vee l(n) -1\leq\deg(v_{j}^{(n)})\leq
    \lceil js^{-1}\rceil \vee l(n)
\end{align*}
since there are $l(n)-1$ neighbors in $V_{n}$, $(\lceil
js^{-1}\rceil-l(n))_{+}$ in $V_{n+1}$ and $0$ or $1$ neighbor in
$V_{n-1}$. \\
Let now $k\ge2$. If $\deg (v_{j}^{(n)})\leq k-1$, we have $\lceil js^{-1}\rceil \vee l(n)\leq k$. This implies
$j \leq ks$. Note further that there is a unique number
$N=N_k \in \mathbb{N}$
such that $l({N_{k}})\le k< l({N_{k}+1})$. It follows that $n\leq N_{k}$.
Hence, for the vertices
$v_{1}^{(N)},\ldots,v^{(N)}_{l(N)}\in V_{N}$, we deduce
\begin{align*}
    \{v\in V\mid \deg(v)\leq k -1\}\subseteq \{v_{j}^{(N)}\mid j\leq
    ks\}\cup\bigcup_{n=1}^{N_{k}-1}V_{n}.
\end{align*}
Thus, using the inequality $2l(n)\leq l(n+1)$ iteratively, yields
\begin{align*}
    \eta(k-1)\leq ks+\sum_{n=1}^{N_{k}-1}l(n)\le ks + 2l(N_k -1).
\end{align*}
We estimate, using $k \geq l(N_{k})$ and $l(N_{k})=\lceil s^{-1}\rceil l(N_{k}-1)^2$,
\begin{align*}
    \frac{\eta(k-1)}{k}
\leq \frac{ks+2l(N_{k}-1)}{k}\leq
s+2\frac{l(N_{k}-1)}{l(N_{k})}=s+2\frac{1}{\lceil
s^{-1}\rceil l(N_{k}-1)},
\end{align*}
and conclude
\begin{align*}
    \limsup_{k\to\infty}    \frac{\eta(k)}{k}\leq s.
\end{align*}
To show $\al_{\infty}\leq1-2s/(1+s^{2})$, we consider $\#\partial
V_{n}/\mathrm{vol}(V_{n})$. We start by estimating
\begin{align*}
    \mathrm{vol}(V_{n})&=\sum_{j=1}^{l(n)}\deg(v_{j}^{(n)})
    \le \sum_{j=1}^{l(n)} ( \lceil js^{-1}\rceil \vee l(n))
    = \int_0^{l(n)} \left( \lceil{ \lceil j \rceil  s^{-1}}\rceil \vee l(n) \right) dj   \\
    &\leq    2s^{-1} l(n) +       s^{-1}   \int_0^{l(n)} \left(  j   \vee  sl(n) \right) dj \\
    &=     2s^{-1} l(n) +   s^{-1} \cdot  \frac 1 2  \left( l(n)^2 + (sl(n))^2 \right)
		\le \frac{1+s^{2}}{2s}l(n)(l(n)+4).
\end{align*}
Now, we use the equalities $\deg(V_{n})=2\#E_{n}+\#\partial V_{n}$
and $\#E_{n}=l(n)(l(n)-1)$ to infer
\begin{align*}
    \frac{\#\partial
    V_{n}}{\mathrm{vol}(V_{n})}=1-\frac{2\#E_{n}}{\mathrm{vol}(V_{n})} \leq
    1-\frac{2s}{(1+s^{2})}\frac{(l(n)-1)}{(l(n)+4) }
\end{align*}
which by the discussion given at the beginning implies $\al_{\infty}=1-2s/(1+s^{2})$.
\end{eg}

\section{Exponential volume growth}\label{s:volume}
In this section we prove Theorem~\ref{t:volume}. This is followed by
an example which shows sharpness of the bound.

\begin{proof}[Proof of Theorem~\ref{t:volume}]
Let $s \in (r_{\infty},1)$ and let $n$ be chosen according to
Lemma~\ref{l:basic}. For $v\in V$,  let $k\ge0$ be such that
$\#B_{k}(v)\ge 2n$. We use Lemma~\ref{l:basic} to estimate \begin{align*}
    \mathrm{vol}(B_{k}(v))&=\sum_{w\in
    B_{k}(v)}\deg(w)\ge\sum_{j=0}^{\#B_{k}(v)-1}\deg(v_{j})
    \ge \frac{1}{s}\sum_{j=n}^{\#B_{k}(v)-1}(j+1)\\
    &\ge\frac{1}{s}\int_{\frac 1 2  \# B_{k}(v)}^{\#B_{k}(v)}j
    dj=\frac{3}{8s}(\#B_{k}(v))^{2}.
\end{align*}
Next, we estimate $\#B_{k}$.
By Lemma~\ref{l:basic}, there is a $w \in B_k(v)$ such that $\deg(w) \geq  s^{-1}{\#B_{k}(v)}$.
We obtain $w \notin B_{k-1}(v)$ since else $B_1(w) \subseteq B_k(v)$ which is a contradiction to $s<1$.
Hence,
\begin{align*}
B_1(w) \dot{\cup} B_{k-2}(v) \subseteq B_{k+1} (v)
\end{align*}
and consequently,
\begin{align*}
\# B_{k+1} (v) \geq \# B_1(w) + \# B_{k-2}(v) \geq s^{-1}\#B_{k}(v)  + \# B_{k-2}(v).
\end{align*}
Hence, a lower bound of the growth of $b_k = \# B_k(v)$ is encoded in the eigenvalues of the matrix
\begin{align*}
M_{s}=
\begin{pmatrix}
  0 & 1 & 0 \\
  0 & 0 & 1 \\
  1 & 0 & s^{-1}
 \end{pmatrix}.
\end{align*}
Indeed, the inequality above translates into the componentwise vector inequality $\left(b_{k-1},b_k,b_{k+1}\right)^T \geq M_{s} \left(b_{k-2},b_{k-1},b_{k}\right)^T $.
The characteristic polynomial $p$ of $M_{s}$ is given by $p(z)=z^3-s^{-1}z^2 - 1$ with the largest real root $a_s$.
By the Perron-Frobenius theorem the eigenvector of the irreducible matrix $M_{s}$ to the eigenvalue $a_{s}$ has strictly positive entries. Thus, there is $C_s>0$ such
that for $k$ large enough, we have
$  \#B_{k} (v) \ge C_s {a_s^{k}} $.
By using the estimate $ \mathrm{vol}(B_{k} (v) ) \geq \frac{3}{8s}(\#B_{k} (v) )^{2}$, we obtain
\[
\mathrm{vol}(B_k(v)) \geq D_s a_s^{2k}
\]
for some $D_s>0$ and large enough $k$.
This finishes the proof since $a_s$ is continuous in $s$ for $s>0$ and $a = a_{r_\infty}$.
\end{proof}

Below we give an example for a family of exponentially growing graphs with their values for $r_{\infty}$ being arbitrarily close to zero. These graphs are so called antitrees, see e.g. \cite{KLW,Woj3} and the examples show qualitatively that the above theorem is sharp in the sense of exponential growth.

\begin{eg}
For $\sigma \in \mathbb{N}$ with ${\sigma \geq 2}$, let a graph $G_{\sigma}=(V_{\sigma},E_{\sigma})$ be given with $V=\bigcup_{n\ge0}S_{n}$ such that $\#S_{n}=\sigma^{n}$. Moreover, every vertex in $S_{n}$ is connected to every vertex except to itself in $S_{n-1}\cup S_{n}\cup S_{n+1}$, $n\ge1$.
Hence, for all $n \geq 2$, we have $v \in S_n$ if and only if $\deg v = \sigma^{n-1} + \sigma^n + \sigma^{n+1} -1$.
Let $k \geq \sigma^2 + \sigma $ be an  integer and
 $n \geq 2$ is the unique integer such that
\[
\sigma^{n-2} + \sigma^{n-1} + \sigma^{n} - 1 \leq k < \sigma^{n-1} + \sigma^n + \sigma^{n+1} -1.
\]
Then,
\begin{eqnarray*}
\frac{\eta (k)}{k} \leq
\frac{\eta (\sigma^{n-2} + \sigma^{n-1} + \sigma^{n} - 1)}{\sigma^{n-2} + \sigma^{n-1} + \sigma^{n} - 1} = \frac{\sum_{j=0}^{n-1} \sigma^j}{\sigma^{n-2} + \sigma^{n-1} + \sigma^{n} - 1} <\frac{1}{\sigma-1}.
\end{eqnarray*}
This shows that $r_{\infty,\sigma}:= \limsup_{k\to\infty} \eta (k)/k$
is indeed strictly smaller than one for each $\sigma \geq 2$.
We also derive from the above computation
that
$\lim_{\sigma\to\infty} r_{\infty, \sigma} = 0$.
\end{eg}

There are also examples that show that the precise bound is actually sharp. However, the construction is rather lengthy, so, we refrain from giving the details.

\section{Stochastic incompleteness}\label{s:sc}

This section is devoted to the proof of Theorem~\ref{t:sc} which shows that
rapidly branching graphs with large growth acceleration are stochastically
incomplete.

\begin{proof}[Proof of Theorem~\ref{t:sc}]
By \cite[Theorem~25, Proposition~28]{KL2}
 (cf. also \cite{Woj1,Woj2}) stochastic incompleteness is equivalent
 to existence of  a bounded and positive $\lm$-subharmonic function $u$ for some
$\lm>0$, i.e., $u>0$ satisfies
\begin{align*}
    \sum_{w\sim v}(u(v)-u(w)) +\lm u(v)\leq 0,\qquad v\in V.
\end{align*}
Indeed, it suffices for $u$ to be $\lm$-subharmonic outside of a finite
set, see \cite[Corollary~1.2]{KL1} or \cite[Theorem~4.1]{Hu11}.

We define for    $p\in(0,1)$, $\lm>0$, the function
$u:V\to(0,1)$ by
\begin{align*}
u(v)=1-(\deg(v)+\lm)^{-p},\qquad v\in V.
\end{align*}
We observe
\begin{align*}
\sum_{w\sim v}(u(v)-u(w)) +\lm
u(v)=\lm-(\deg(v)+\lm)^{1-p}+\sum_{w\sim v}(\deg(w)+\lm)^{-p}.
\end{align*}
We proceed by estimating the third term on the right hand side.  Let
$r_{\infty}<s<e^{-1}$ and let $n$ be chosen according to
Lemma~\ref{l:basic}. Below we will
 choose $\lm$ and $p$ such that the
function $u$ becomes $\lm$-subharmonic outside of the set
$K=\bigcup_{k=0}^{n-1}B_{1}(v_{k})$  which is the set of neighbors of $\{v_{0},\ldots,v_{n-1}\}$.
 For
$v\in V\setminus K$,  we find using Lemma~\ref{l:basic}
\begin{align*}
\sum_{w\sim v}(\deg(w)+\lm)^{-p}
&\le \hspace{-.1cm}
    \sum_{k=n}^{n+\deg(v)-1}\hspace{-.4cm}(\deg(v_{k})+\lm)^{-p} \le
    \hspace{-.1cm}\sum_{k=n}^{n+\deg(v)-1}\hspace{-.3cm}\Big(\frac{(k+1)}{s}+\lm \Big)^{-p}\\
    &\le
   {s^{p}}\sum_{k=1}^{\deg(v)}(k+s\lm )^{-p}\leq {s^{p}} \int_{0}^{\deg(v)}(k+s\lm
   )^{-p}dk\\
   &\leq \frac{s^{p}}{1-p}\Big((\deg(v)+\lm)^{1-p}-(s\lm)^{1-p}\Big),
\end{align*}
where we recall  $s<1$ and $p<1$ for the last estimate.
Since we have
$\lim_{p\to 0}(1-p)^{1/p}=e^{-1}$ and $r_{\infty}<s<e^{-1}$,
there is $p$ such that $s\leq(1-p)^{1/p}$. We fix  this choice of $p$ for
what follows and remark
\begin{align*}
    \frac{s^{p}}{1-p}\le1.
\end{align*}
Furthermore,   we let $\lm$ be chosen such that $\lm^{p}\leq
s/(1-p)$. This yields
\begin{align*}
    \lm\le \frac{s}{1-p}\lm^{1-p}= \frac{s^{p}}{1-p}(s\lm)^{1-p}.
\end{align*}
Putting together what we have estimated so far with the equality in
the beginning we find that
\begin{align*}
\lefteqn{\sum_{w\sim v}(u(v)-u(w)) +\lm
u(v)}\\&\le\lm-(\deg(v)+\lm)^{1-p}+
\frac{s^{p}}{1-p}(\deg(v)+\lm)^{1-p}-\frac{s^{p}}{1-p}(s\lm)^{1-p}\\
&\le0.
\end{align*}
According to the discussion in the beginning this finishes the
proof.
\end{proof}

\bibliographystyle{alpha}
\bibliography{literature}

\end{document}